\newtheorem{theorem}{Theorem}[section]
\newtheorem{lemma}[theorem]{Lemma}
\theoremstyle{definition}
\newcommand{\bN}{\mathbb{N}}
\newcommand{\bQ}{\mathbb{Q}}
\newcommand{\bR}{\mathbb{R}}
\newcommand{\et}{\quad\mbox{and}\quad}
\begin{document}

\title[Numbers with almost all convergents in a Cantor set]%
  {Construction of numbers\\ with almost all convergents in a Cantor set}

\author{Damien Roy \and Johannes Schleischitz}

\begin{abstract}
In 1984, K. Mahler
asked how well elements in the Cantor middle third set
can be approximated by rational numbers from that set,
and by rational numbers outside of that set. We consider
more general missing digit sets $C$ and construct
numbers in $C$ that
are arbitrarily well approximable
by rationals in $C$, but badly approximable
by rationals outside of $C$.
More precisely, we construct them so that all but finitely many of
their convergents lie in $C$.
\end{abstract}

\keywords{Cantor set, continued fractions,
Diophantine approximation, parametric geometry of numbers}
\subjclass[2010]{Primary 11A55; Secondary 11J25, 11J82}

\maketitle

\section{Introduction and statement of the main result}
\label{intro}

Let $b\geq 3$ be an integer and $D$ be a proper subset of
$\{0,1,\ldots,b-1\}$ with at least two elements.
We consider the Cantor set $C$ which consists of all real numbers in the
interval $[0,1]$ whose base $b$ expansion involves only the
digits from the set $D$.  This is a compact subset of $\bR$ of
measure zero. It is called the \emph{middle third Cantor set} when $b=3$
and $D=\{0,2\}$.  In 1984, K.~Mahler \cite{Ma1984} proposed a problem
about this set, which also applies to any Cantor set.  He asked how
well irrational elements of $C$ can be approximated by rational
numbers from $C$, and how well they can be approximated by rational
numbers outside of $C$.  A construction of Y.~Bugeaud \cite{Bu2008}
(see also \cite[\S 2.2]{Sc2016})
generalizing earlier work \cite{Sh1979, PS1992} of J.~Shallit and
A.~J.~van der Poorten provides an interesting answer.  For any
monotone decreasing function $\psi\colon\bN\to(0,\infty)$ with
$\lim_{q\to\infty}q^2\psi(q)=0$, defined on the set $\bN$ of positive
integers, it yields an irrational element $\xi$ of $C$ and a
constant $c=c(b)>0$ such that
\[
 \begin{array}{ll}
  |\xi-p/q| \le \psi(q)
   &\text{for infinitely many $p/q \in \bQ\cap C$,}\\[2pt]
  |\xi-p/q| \ge c\,\psi(q)
   &\text{for all $p/q\in \bQ$,}
 \end{array}
\]
with $p/q$ in reduced form.  However, because the
construction is based on the folding lemma, such a number
$\xi$ possesses many good rational approximations $p/q$ besides
those for which $|\xi-p/q| \le \psi(q)$.  As we don't
know whether they belong to $C$ or not, we lack information
about approximation to $\xi$ by rational numbers outside
of $C$.  Our main result below is more precise in this aspect
and, at the same time, answers a question of L.~Fishman and D.~Simmons
in \cite[\S 2.1]{FS2015} by providing irrational elements
of $C$ with all but finitely many convergents inside $C$
(see \cite[Chapter I]{ScLNM} for the notion of convergents of
a real number, and the theory of continued fractions).

\begin{theorem}
 \label{mainres}
Let $C$ be as above.  Then, there is a constant $c_1$,
depending only on $b$ and $D$, with the following property.
For any $\epsilon>0$ and any function
$\Psi\colon \bN\to (0,1]$, there exists $\xi\in C$
whose convergents $p/q\in\bQ$ (in reduced form) with
denominator $q\ge c_1$ all lie in $C$ and satisfy
\begin{equation}
 \label{eq:mainres}
 \min\{\Psi(q), q^{-q}\}
  > \left| \xi - \frac{p}{q} \right|
  > c_2q^{-(1+\epsilon)q}\Psi(q)
\end{equation}
for a constant $c_2=c_2(b,\epsilon)>0$.
If $D$ contains $\{0,1\}$, we may take $c_1=1$, meaning that
all convergents of $\xi$, starting with $0/1$, belong to $C$.
\end{theorem}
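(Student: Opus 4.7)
The plan is to construct $\xi$ step by step through its regular continued fraction expansion $\xi = [a_0; a_1, a_2, \ldots]$, choosing each partial quotient $a_n$ so that the new convergent $p_n/q_n$ both lies in $C$ and produces an approximation error inside the window demanded by (\ref{eq:mainres}). The standard estimates $\frac{1}{q_n(q_n+q_{n+1})} \le \bigl|\xi - p_n/q_n\bigr| \le \frac{1}{q_n q_{n+1}}$ convert (\ref{eq:mainres}) into a bilateral constraint on the next denominator of the form $q_{n+1} \in [A_n, B_n]$, where $A_n \approx \max\bigl(1/(q_n\Psi(q_n)), q_n^{q_n-1}\bigr)$ and $B_n \approx q_n^{(1+\epsilon)q_n}/\Psi(q_n)$. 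The ratio $B_n/A_n$ is of order $q_n^{\epsilon q_n}$, so an enormous range of integers $a_{n+1}$ is admissible.

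The crucial inductive lemma to prove is this: whenever $p_{n-1}/q_{n-1}, p_{n-2}/q_{n-2} \in \bQ\cap C$ and the window is wide enough, there exists $a_n$ in that window with
\[
\frac{p_n}{q_n} \;=\; \frac{a_n p_{n-1} + p_{n-2}}{a_n q_{n-1} + q_{n-2}} \;\in\; C.
\]
Because $p_n/q_n$ differs from $p_{n-1}/q_{n-1} \in C$ by $\pm(q_{n-1} q_n)^{-1}$, it sits in a tiny neighbourhood of a point of $C$. The self-similar structure of $C$ around $p_{n-1}/q_{n-1}$ gives, at each scale $b^{-k}$, a Cantor cylinder of length $b^{-k}$ containing $p_{n-1}/q_{n-1}$, and the plan is to enforce $p_n/q_n \in C$ scale by scale by prescribing congruences on $a_n$ modulo increasing powers of $b$, exploiting the eventual coprimality of $q_{n-1}$ with $b$. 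The enormous width of $[A_n, B_n]$ accommodates these congruence conditions simultaneously.

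For the base case, if $\{0,1\}\subseteq D$ one takes $a_0 = 0$ and $a_1 = b$, so that $p_0/q_0 = 0$ and $p_1/q_1 = 1/b = 0.1\overline{0}$ both lie in $C$ and $c_1 = 1$ is legitimate. For a general $D$, one builds a finite initial segment of the continued fraction by hand until two consecutive convergents in $C$ are reached; then set $c_1 = q_{n_0}+1$ with $n_0$ the first such index and apply the inductive lemma from stage $n_0+1$ onwards. The limit $\xi = \lim_{n\to\infty} p_n/q_n$ belongs to $C$ because $C$ is closed and contains every convergent from index $n_0$ on, and the two sides of (\ref{eq:mainres}) hold for every convergent with $q \ge c_1$ by construction.

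The principal obstacle is precisely the inductive lemma: a priori, the rationals $(a p' + p'')/(a q' + q'')$ need not intersect the measure-zero set $C$ for any integer $a$. Establishing their existence within $[A_n, B_n]$ requires a quantitative density argument weaving together the Cantor self-similarity of $C$ with the arithmetic of the continued-fraction recursion, so that infinitely many congruence conditions on $a_n$ can be resolved inside the admissible window. This is where the bulk of the paper's technical work is expected to lie, and where the parametric-geometry-of-numbers viewpoint suggested by the keywords presumably contributes the quantitative input needed to carry the induction through.
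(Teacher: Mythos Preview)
Your outline identifies the right target---manufacture convergents that lie in $C$ and have error in a prescribed window---but the proposal is a plan rather than a proof, and the missing piece is precisely the one you flag as the ``principal obstacle.'' You never establish the inductive lemma, and your sketch for it does not work as stated. Given two consecutive convergents $p'/q',\,p''/q''\in C$, there is no a priori reason why any of the fractions $(ap'+p'')/(aq'+q'')$ should land in the measure-zero set $C$; your suggestion of imposing congruences on $a$ modulo powers of $b$ does not control the base-$b$ digits of such a fraction, because the digit pattern of $p/q$ depends on the multiplicative order of $b$ modulo the $b$-free part of $q$, not merely on residues of the numerator and denominator. Nor is there any ``eventual coprimality of $q_{n-1}$ with $b$'' to exploit unless one builds it in. Finally, parametric geometry of numbers appears in the paper only as motivation; it plays no role in the proof.

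The paper's argument runs in the opposite direction from yours. Rather than choosing partial quotients and trying to force the resulting convergents into $C$, it writes down rationals that are in $C$ \emph{by construction}---namely $p_i/q_i=(0.v\,\overline{w_i})_b$ with explicit periodic words $w_i$ over the digit alphabet $D$, built recursively by $w_{i+1}=w_i^{m_{i+1}}w_i'$ where $w_i'$ flips the last digit of $w_i$---and then proves arithmetically that these are consecutive convergents of their limit. The verification reduces to showing $b^{m_1}(b^{|w_i|}-1)=u\,q_{i-1}q_i$ for all $i$, which is handled by a careful choice of $m_1$ (via $\varphi(u_2^2(b-1)^2)$) and congruence conditions on the exponents $m_i$. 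This sidesteps your inductive lemma entirely: membership in $C$ is automatic, and the work goes into the determinant identity $p_{i+1}q_i-p_iq_{i+1}=\pm1$. The freedom in the exponents $m_{i+1}$ then provides exactly the window needed for the two-sided estimate~\eqref{eq:mainres}.
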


In particular, the numbers $\xi$ of $C$ that we construct are
Liouville numbers that are $\psi$-approximable by rational
numbers inside $C$ and badly approximable by rational numbers
outside of $C$.  Indeed, if a fraction $p/q\in \bQ\setminus C$,
in reduced form, has denominator $q\ge c_1$, then $p/q$ is not
a convergent of $\xi$ and so $|\xi-p/q|\ge 1/(2q^2)$.  On the
other hand, a result of L.~Fishman and D.~Simmons
\cite[Corollary 1.2]{FS2015} shows the existence of a
constant $c_3=c_3(b)>0$ such that the inequality
$|\xi-p/q|\le c_3/q^2$ has infinitely many solutions
$p/q \in \bQ\setminus C$.  Thus the approximation to $\xi$
by rational numbers outside of $C$ is under control as well.
As the proof will show, we even obtain explicit
base $b$ expansions for the convergents of $\xi$ with large enough
denominators.

Note that, for a general Cantor set $C$, there may exist no element
of $C$ with all its convergents in $C$.  For example, if $b$ is a
large Fibonacci number and if $D=\{d,d+1\}$ where $d$ is the preceding
Fibonacci number, then $C\subseteq [d/(b-1),(d+1)/(b-1)]$ and all elements
of $C$ have the same initial convergents $0/1,\,1/1,\,1/2,\,2/3,\,3/5,\dots$ none
of which belong to $C$.

The original motivation for this paper was to determine
whether or not Schmidt and Summerer parametric geometry of
numbers \cite{Ro2015, SS2013} extends without
qualitative change when restricting to points of the form
$(1,\xi_1,\dots,\xi_n)$ with $\xi_1,\dots,\xi_n\in C$ instead
of the general points with $\xi_1,\dots,\xi_n\in \bR$.
For $n=1$, the question amounts to determine whether or not
for any irrational $\xi\in\bR$ there exists $\xi'\in C$ and a constant
$c>1$ such that, for any convergent $p/q$ of $\xi$ (resp.\ $p'/q'$
of $\xi'$), there exists a convergent $p'/q'$ of $\xi'$
(resp.\ $p/q$ of $\xi$) with $q\le cq'$ and $q'\le cq$.
We do not know the answer but we observed that if the denominators
of the convergents of $\xi$ grow very fast then $\xi'$ must have
essentially all its convergents in $C$ and the search for such
numbers $\xi'$ led us to the construction that we describe below.

\section{Proof of the theorem}
\label{sec:proof}

We will assume, without loss of generality, that $D$ consists of only
two digits $d_1,d_2$ with $0\le d_1 <d_2\le b-1$.  Let $D^*$
denote the monoid of finite words on the alphabet $D$ with the
product given by concatenation, and let $|w|$ denote the length of
a word $w\in D^*$.  Then, each rational number in $C$, except possibly $1$,
has an ultimately periodic base $b$ expansion of the form
\[
 (0.v\,\overline{w})_b = \frac{(vw)_b-(v)_b}{b^m(b^N-1)}
 \quad
 \text{with}
 \quad
 m=|v|
 \et
 N=|w|>0,
\]
where $v\in D^*$ is a possibly empty pre-period, and $w\in D^*$ is a non-empty
period.   The numerator in the right hand-side of the formula is the difference
of two integers $(vw)_b$ and $(v)_b$, written in base $b$.

For each non-empty word $w\in D^*$, let $w'$ be the word obtained
from $w$ by replacing its last letter or digit by the other element
of the set $D$, so that $w$ and $w'$ differ only in their last digits.
Our construction depends uniquely on the choice of a
strictly increasing sequence of non-negative integers $(m_i)_{i\ge 1}$.
We define a word $v$ and a sequence of words $(w_i)_{i\ge 1}$ in $D^*$ by
\[
 v=d_1^{m_1},
 \quad
 w_1=d_2d_1^{m_1}
 \et
 w_{i+1}=(w_i)^{m_{i+1}}w_i'
 \quad
 \text{for each $i\ge 1$.}
\]
Then the sequence of rational numbers, in reduced form,
\[
 \frac{p_i}{q_i}=(0.v\,\overline{w_i})_b
 \quad (i\ge 1)
\]
is contained in $C$ and converges to an element $\xi$ of $C$.  We claim that,
for an appropriate choice of $(m_i)_{i\ge 1}$, they are consecutive
convergents of $\xi$.  The simplest case is when $D=\{0,1\}$.  As we will
see, we can then choose $m_1=0$ so that $v$ is the empty word and
all fractions $p_i/q_i$ have purely periodic base
$b$ expansion.  The reader who wants to concentrate on this case may
skip the technical lemma \ref{lemma:choice_m1}.  We start with a simple
computation.

\begin{lemma}
\label{lemma:diff_p/q}
For each $i\ge 1$, we have
\begin{equation}
\label{eq:lemma:diff_p/q}
 \frac{p_{i+1}}{q_{i+1}}
 = \frac{p_i}{q_i} + \frac{(-1)^{i+1}u}{b^{m_1}(b^{N_{i+1}}-1)}
\end{equation}
where $u=d_2-d_1$ and $N_{i+1}=|w_{i+1}|$.
\end{lemma}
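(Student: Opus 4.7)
The plan is to compute both sides of \eqref{eq:lemma:diff_p/q} directly from the closed-form identity $(0.v\overline{w})_b=((vw)_b-(v)_b)/(b^{|v|}(b^{|w|}-1))$ recorded just above. Writing $(vw_i)_b=(v)_b\,b^{N_i}+W_i$ with $W_i:=(w_i)_b$, I would first split
\[
 \frac{p_i}{q_i}=\frac{(v)_b}{b^{m_1}}+\frac{W_i}{b^{m_1}(b^{N_i}-1)}.
\]
The first summand is independent of $i$ and cancels from $p_{i+1}/q_{i+1}-p_i/q_i$, so the task reduces to evaluating the difference of the two fractional parts, a quantity depending only on $W_i$, $W_{i+1}$, $N_i$, $N_{i+1}$ and $m_1$.

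Next I would translate the recursion $w_{i+1}=(w_i)^{m_{i+1}}w_i'$ into arithmetic. Viewing $(w_i)^{m_{i+1}}$ as an $m_{i+1}$-fold concatenation and summing the resulting geometric series yields $((w_i)^{m_{i+1}})_b=W_i(b^{m_{i+1}N_i}-1)/(b^{N_i}-1)$, so appending $w_i'$ on the right gives
\[
 W_{i+1}=\frac{W_i\,b^{N_i}(b^{m_{i+1}N_i}-1)}{b^{N_i}-1}+W_i', \et N_{i+1}=(m_{i+1}+1)N_i,
\]
where $W_i':=(w_i')_b$. Substituting into the combined numerator $W_{i+1}(b^{N_i}-1)-W_i(b^{N_{i+1}}-1)$, the $W_i\,b^{(m_{i+1}+1)N_i}$ terms cancel telescopically, leaving exactly $(W_i'-W_i)(b^{N_i}-1)$. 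Cancelling the factor $b^{N_i}-1$ against the combined denominator then produces $(W_i'-W_i)/(b^{m_1}(b^{N_{i+1}}-1))$, which matches the right-hand side of \eqref{eq:lemma:diff_p/q} modulo the sign.

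Finally I would identify $W_i'-W_i$ with $(-1)^{i+1}u$. Since $w_i'$ differs from $w_i$ only in its terminal letter, this difference equals $+u$ when $w_i$ ends in $d_1$ and $-u$ when it ends in $d_2$. Assuming $m_1\geq 1$, the word $w_1=d_2 d_1^{m_1}$ ends in $d_1$; and because the recursion forces the last letter of $w_{i+1}$ to equal the last letter of $w_i'$, hence the opposite of the last letter of $w_i$, an easy induction shows the terminal digits alternate $d_1,d_2,d_1,\ldots$, and the corresponding signs follow the pattern $(-1)^{i+1}$. The only delicate step is the algebraic cancellation in the middle paragraph, but once everything is written in terms of $W_i,W_i',N_i,m_{i+1}$ it is purely mechanical, with no Diophantine content.
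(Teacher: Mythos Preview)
Your argument is correct. The paper's proof is shorter and more direct: it simply observes that since $w_{i+1}=(w_i)^{m_{i+1}}w_i'$, the infinite digit string $v\,\overline{w_{i+1}}$ differs from $v\,\overline{w_i}$ only at the last position of each successive block of length $N_{i+1}$ following the prefix $v$, so the difference of the two real numbers equals $(-1)^{i+1}u\cdot(0.0^{m_1}\overline{0^{N_{i+1}-1}1})_b$, and evaluating this single geometric series gives the result in one line. Your route through the closed-form identity and the algebraic recursion for $W_i=(w_i)_b$ reaches the same endpoint via the telescoping cancellation $W_{i+1}(b^{N_i}-1)-W_i(b^{N_{i+1}}-1)=(W_i'-W_i)(b^{N_i}-1)$, but more mechanically; the upside is that every step is a polynomial identity one can verify symbol by symbol, with no need to align two infinite expansions. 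Your flag that the sign identification uses $m_1\ge1$ is on point: when $m_1=0$ the word $w_1=d_2$ ends in $d_2$, the alternation of terminal digits reverses, and the paper's own proof (which asserts ``$w_i$ ends in $d_1$ for odd indices $i$'') tacitly makes the same assumption.
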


\begin{proof}
Since $w_i$ ends in $d_1$ for odd indices $i$ and in $d_2$ for even ones, we find
\[
 (0.v\,\overline{w_{i+1}})_b
 = (0.v\,\overline{w_i})_b + (-1)^{i+1}u(0.0^{m_1}\overline{\epsilon})_b
\]
where $\epsilon$ consists of $N_{i+1}-1$ zeros followed by a one. The result follows.
\end{proof}

\begin{lemma}
\label{lemma:criterion}
Suppose that the sequence $(q_i)_{i\ge 1}$ is strictly increasing.
Then  $(p_i/q_i)_{i\ge 1}$ consists of all convergents to $\xi$ with denominator
at least $q_1$ if and only if, for each $i\ge 1$, we have
$b^{m_1}(b^{N_{i+1}}-1)=uq_iq_{i+1}$.
\end{lemma}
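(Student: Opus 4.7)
The plan is to rewrite the arithmetic condition $b^{m_1}(b^{N_{i+1}}-1)=uq_iq_{i+1}$ as the classical Farey-neighbor identity $|p_{i+1}q_i - p_iq_{i+1}|=1$, and then argue that this identity, together with the sign information already encoded in Lemma~\ref{lemma:diff_p/q}, is enough to force the $p_i/q_i$ to be exactly the convergents of $\xi$ with denominator $\ge q_1$. The first step is direct: equating the expression for $p_{i+1}/q_{i+1}-p_i/q_i$ from Lemma~\ref{lemma:diff_p/q} with the ordinary fraction $(p_{i+1}q_i-p_iq_{i+1})/(q_iq_{i+1})$ yields
\[
 (p_{i+1}q_i - p_iq_{i+1})\,b^{m_1}(b^{N_{i+1}}-1) = (-1)^{i+1}u\,q_iq_{i+1},
\]
which (taking absolute values) shows that the condition in the lemma is equivalent to $|p_{i+1}q_i-p_iq_{i+1}|=1$ for every $i$, with sign $(-1)^{i+1}$.

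One direction is then immediate from continued fraction theory: if the $p_i/q_i$ are successive convergents of $\xi$, the standard identity $|P_{n+1}Q_n - P_nQ_{n+1}|=1$ gives what is needed. For the converse, assume $|p_{i+1}q_i-p_iq_{i+1}|=1$ for every $i\ge 1$. Since $N_{i+1}$ is strictly increasing in $i$, Lemma~\ref{lemma:diff_p/q} shows the differences $p_{i+1}/q_{i+1}-p_i/q_i$ alternate in sign and decrease in absolute value, so $\xi$ lies strictly between $p_i/q_i$ and $p_{i+1}/q_{i+1}$ for each $i$. I would then invoke the following standard fact: a pair of reduced fractions $p/q,r/s$ with $q<s$, $|ps-qr|=1$, and $\xi$ strictly between them is automatically a pair of consecutive convergents of $\xi$. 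The proof is a best-approximation argument: the Farey-neighbor property forces any reduced fraction $p'/q'$ with $q'\le q$, $(p',q')\ne(p,q)$, to lie outside the open interval $(\min(p/q,r/s),\max(p/q,r/s))$ (otherwise $q'\ge q+s>q$), and a short estimate using $|p'/q'-p/q|\ge 1/(qq')$ together with $|\xi-p/q|<1/(qs)$ shows $|q'\xi-p'|>|q\xi-p|$; hence $p/q$ is a best approximation of the second kind, and therefore a convergent. Applying this to each pair $(p_i/q_i,p_{i+1}/q_{i+1})$ shows every $p_i/q_i$ is a convergent. The alternation around $\xi$ forces any two consecutive members of our sequence to be convergents of opposite parity, i.e.\ convergents $P_n$ and $P_{n+k}$ with $k$ odd; since the cross-product identity gives $|P_{n+k}Q_n-P_nQ_{n+k}|\ge 2$ for $k\ge 3$, the assumption $|p_{i+1}q_i-p_iq_{i+1}|=1$ forces $k=1$. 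Thus the $p_i/q_i$ are successive convergents of $\xi$, and because $q_1<q_2<\cdots$ are convergent denominators starting with $q_1$, the list exhausts all convergents of denominator $\ge q_1$.

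The main obstacle is the backward direction: turning the purely arithmetic Farey-neighbor identity, together with the alternation supplied by Lemma~\ref{lemma:diff_p/q}, into the genuinely Diophantine statement that no convergent of $\xi$ is missing from the list. The Farey-neighbor property must do double duty — once to exclude intermediate competing fractions in the best-approximation argument, and once (through the cross-product identity for non-adjacent convergents) to exclude skipped convergents between consecutive $p_i/q_i$.
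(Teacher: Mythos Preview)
Your proposal is correct and follows the same approach as the paper: both rewrite the arithmetic condition via Lemma~\ref{lemma:diff_p/q} as the determinant identity $p_{i+1}q_i-p_iq_{i+1}=(-1)^{i+1}$, deduce the forward implication from the standard convergent identity, and then appeal to continued fraction theory for the converse. The paper in fact leaves that converse (``strictly increasing $q_i$ together with $p_{i+1}q_i-p_iq_{i+1}=(-1)^{i+1}$ implies the $p_i/q_i$ are exactly the convergents of their limit with denominator $\ge q_1$'') as an exercise without reference, whereas you supply a complete argument for it via best approximations and the cross-product bound $|P_{n+k}Q_n-P_nQ_{n+k}|\ge 2$ for odd $k\ge 3$; so your write-up is strictly more detailed than the paper's at precisely the point the paper is silent.
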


\begin{proof}
The formula \eqref{eq:lemma:diff_p/q} from
Lemma \ref{lemma:diff_p/q} can be rewritten as
\[
 \det\begin{pmatrix} p_{i+1} &q_{i+1}\\ p_i &q_i \end{pmatrix}
 = (-1)^{i+1}\frac{uq_iq_{i+1}}{b^{m_1}(b^{N_{i+1}}-1)}
 \quad (i\ge 1).
\]
If $p_i/q_i$ and $p_{i+1}/q_{i+1}$ are consecutive convergents of $\xi$
then the above determinant is $\pm 1$ and so $uq_iq_{i+1}=b^{m_1}(b^{N_{i+1}}-1)$.
Conversely, suppose that the latter equality holds for each $i\ge 1$.
Then we have
\begin{equation*}
\label{eq1:lemma:criterion}
 \det\begin{pmatrix} p_{i+1} &q_{i+1}\\ p_i &q_i \end{pmatrix}
 = (-1)^{i+1}
 \quad (i\ge 1),
\end{equation*}
and, since $(q_i)_{i\ge 1}$ is increasing, we conclude that
the sequence $(p_i/q_i)_{i\ge 1}$ consists of all convergents of its
limit $\xi$, with denominator at least $q_1$.  We leave the verification
of this fact as an interesting exercise about continued fractions
(we do not have a precise reference to propose).
\end{proof}

The choice of $m_1$ is the most delicate part of the argument.
It depends on the factorisation of $u=d_2-d_1$ in the form
\[
 u = u_1u_2,
\]
where $u_1,u_2$ are positive integers with the prime factors of $u_1$
dividing $b$ and those of $u_2$ not dividing $b$.  In the
statement below, $\varphi$ denotes Euler's totient function.

\begin{lemma}
\label{lemma:choice_m1}
Suppose that $m_1=N-1$ where $N=\varphi(u_2^2(b-1)^2)$.
Then, we have $u_1|b^{m_1}$ and the reduced fraction
$p_1/q_1=(0.v\,\overline{w_1})_b$ satisfies $u_2q_1=b^{N}-1$.
\end{lemma}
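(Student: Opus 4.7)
The plan is to reduce the lemma to two arithmetic statements about the (not necessarily reduced) fraction
\[
\frac{p_1}{q_1} = (0.v\,\overline{w_1})_b = \frac{A}{b^N-1},
\qquad
A := (d_1^{m_1} d_2)_b = d_1 M + u,
\]
where $M := (b^N-1)/(b-1)$. The identity on the left follows because the digit sequence $v, w_1, w_1, \ldots$ coincides with the purely periodic sequence of period $d_1^{m_1} d_2$ of length $N = m_1 + 1$; the formula for $A$ is a direct calculation using $(d_1^N)_b = d_1 M$. It then suffices to verify (i) $u_1 \mid b^{m_1}$ and (ii) $\gcd(A, b^N-1) = u_2$; claim (ii) gives $q_1 = (b^N-1)/u_2$ as required.

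Claim (i) is elementary prime counting. Each prime $p \mid u_1$ divides $b$, and $v_p(u_1) \le \log_2 u_1 \le \log_2(b-1)$, while $m_1 = N - 1 \ge (b-1)\varphi(b-1) - 1 \ge b-2 \ge \log_2(b-1)$ for $b \ge 3$, so $p^{v_p(u_1)} \mid p^{m_1} \mid b^{m_1}$.

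For (ii) the workhorse is Euler's theorem: since $\gcd(b, u_2^2(b-1)^2) = 1$, one has $u_2^2(b-1)^2 \mid b^N - 1 = (b-1)M$, hence $u_2^2(b-1) \mid M$. In particular $u_2 \mid A$, yielding one inclusion. For the reverse, I plan a prime-by-prime analysis of $p \mid \gcd(A, b^N-1)$; such $p$ cannot divide $b$. If $p \mid u_2$ with $k := v_p(u_2) \ge 1$, then $v_p(M) \ge 2k > k = v_p(u)$, forcing $v_p(A) = v_p(d_1 M + u) = k$ (the case $d_1 = 0$ being immediate since then $A = u$); this pins $v_p(\gcd) = k = v_p(u_2)$. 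If $p \nmid u_2$ and $p \mid b-1$, then $(b-1) \mid M$ gives $A \equiv u \pmod{b-1}$, so $p \mid A$ forces $p \mid u$, hence $p \mid u_1$, hence $p \mid b$, contradicting $p \mid b-1$. If $p \nmid u_2 b(b-1)$, then $(b-1)M = b^N-1$ and $p \nmid b-1$ force $p \mid M$, whence $p \mid A$ gives $p \mid u = u_1 u_2$, again impossible. The three cases combine to give $\gcd(A, b^N - 1) = u_2$.

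The main obstacle I anticipate is the valuation argument at primes $p \mid u_2$: the square in $u_2^2$ inside $N = \varphi(u_2^2(b-1)^2)$ is essential, as it guarantees $v_p(M) > v_p(u)$ and therefore that the gcd matches $u_2$ exactly rather than a larger divisor. A naive choice such as $N = \varphi(u_2(b-1))$ would not suffice, and the lemma would fail.
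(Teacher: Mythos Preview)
Your proof is correct and essentially identical to the paper's: you rewrite $(0.v\,\overline{w_1})_b$ as the purely periodic $(0.\overline{d_1^{m_1}d_2})_b=(d_1M+u)/(b^N-1)$ with $M=(b^N-1)/(b-1)$, use Euler's theorem to get $u_2^2(b-1)\mid M$, and then compute the gcd prime by prime. The only cosmetic difference is that the paper merges your cases~1 and~2 into a single case ``$p\mid u_2(b-1)$'' by observing uniformly that $v_p(M)\ge v_p(u_2^2(b-1))>v_p(u_2)=v_p(u)$, which handles both sub-cases at once; your split treatment is equally valid.
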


\begin{proof}
Since $N\ge u_2(b-1)\ge b-1$, we have $m_1\ge b-2\ge
u_1-1\ge v_p(u_1)$ for any prime divisor $p$ of $u_1$,
where $v_p$ denotes the valuation at $p$. Since any such
prime $p$ divides $b$, it follows that $u_1|b^{m_1}$.

For the second assertion, set $S=1+b+\cdots+b^{N-1}$,
so that $b^N-1=(b-1)S$.  We find
\[
 (0.v\,\overline{w_1})_b
 = (0.d_1^{m_1}\overline{d_2d_1^{m_1}})_b
 = (0.\overline{d_1^{m_1}d_2})_b
 = \frac{(d_1^{m_1}d_2)_b}{b^N-1}
 = \frac{ d_1S+u}{b^N-1}.
\]
Thus, we simply need to show that $\gcd(d_1S+u, b^N-1)=u_2$ or,
equivalently, that
\[
 \min\{v_p(d_1S+u), v_p(b^N-1)\} = v_p(u_2)
\]
for every prime factor $p$ of $b^N-1$.  Fix such a prime number $p$.

Since $b$ is coprime to $u_2(b-1)$, we have $b^N\equiv 1 \mod u_2^2(b-1)^2$.
Thus, $u_2^2(b-1)$ divides $S$.  If $p$ divides $u_2(b-1)$, this implies
that
\[
 v_p(S)\ge v_p(u_2^2(b-1))>v_p(u_2)=v_p(u),
\]
so $v_p(d_1S+u)=v_p(u_2)<v_p(S)\le v_p(b^N-1)$ and we are done.
Otherwise, $p$ divides $S$ but not $u$, so it does not divide $d_1S+u$
and we are done again.
\end{proof}

\begin{lemma}
\label{lemma:recursion}
Let $u_1$ and $u_2$ be as above.  Suppose that
\begin{itemize}
  \item[(i)]
  $u_1|b^{m_1}$ \et $u_2q_1=b^{m_1+1}-1$,
  \smallskip
  \item[(ii)]
  $q_0p_1(m_2+1)\equiv -1 \mod q_1$ \quad \text{where}\quad $q_0=b^{m_1}/u_1$,\
  \smallskip
  \item[(iii)]
  $q_i|m_{i+1}$ \quad for each $i\ge 2$.
\end{itemize}
Then, for each $i\ge 1$, we have
\begin{equation}
 \label{eq:lemma:recursion}
  b^{m_1}(b^{N_i}-1)=uq_{i-1}q_i
  \quad
  \text{where}
  \quad
  N_i=|w_i|.
\end{equation}

\end{lemma}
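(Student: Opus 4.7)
The plan is to prove \eqref{eq:lemma:recursion} by induction on $i\ge 1$, simultaneously establishing for $i\ge 2$ the convergent-type identity $p_iq_{i-1}-p_{i-1}q_i=(-1)^i$. The base case is immediate from hypothesis (i): since $N_1=m_1+1$, one has $uq_0q_1=(u_1q_0)(u_2q_1)=b^{m_1}(b^{N_1}-1)$. For the inductive step, the geometric-series factorisation
\[
 b^{N_{i+1}}-1=(b^{N_i})^{m_{i+1}+1}-1=(b^{N_i}-1)\,M_{i+1},\qquad M_{i+1}:=\sum_{k=0}^{m_{i+1}}b^{kN_i},
\]
combined with the induction hypothesis, gives $b^{m_1}(b^{N_{i+1}}-1)=uq_{i-1}q_iM_{i+1}$, so the target reduces to proving $q_{i+1}=q_{i-1}M_{i+1}$. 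Substituting into Lemma \ref{lemma:diff_p/q} yields
\[
 \frac{p_{i+1}}{q_{i+1}}=\frac{p_iq_{i-1}M_{i+1}+(-1)^{i+1}}{q_iq_{i-1}M_{i+1}},
\]
and it suffices to show that this fraction simplifies by exactly the factor $q_i$.

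The divisibility of the numerator by $q_i$ is the delicate step, because $q_i$ need not divide $b^{N_i}-1$ (it may share common factors with $b$ coming from the ``$u_1$-part''). To handle this, split $q_i=q_i^{(b)}q_i^*$ into its $b$-part and its $b$-coprime part, so that $q_i^{(b)}\mid b^{m_1}$ and $q_i^*\mid b^{N_i}-1$. Since $N_i>m_1$, one has $b^{m_1}\mid b^{N_i}$, hence $M_{i+1}\equiv 1\pmod{q_i^{(b)}}$; while $b^{N_i}\equiv 1\pmod{q_i^*}$ gives $M_{i+1}\equiv m_{i+1}+1\pmod{q_i^*}$. For $i\ge 2$, hypothesis (iii) forces $m_{i+1}+1\equiv 1\pmod{q_i}$, so by the Chinese remainder theorem $M_{i+1}\equiv 1\pmod{q_i}$; combined with the convergent identity $p_iq_{i-1}\equiv(-1)^i\pmod{q_i}$, this yields $p_iq_{i-1}M_{i+1}+(-1)^{i+1}\equiv(-1)^i+(-1)^{i+1}=0\pmod{q_i}$. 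For $i=1$ the $b$-part is trivial ($q_1$ is coprime to $b$), the congruence $M_2\equiv m_2+1\pmod{q_1}$ is direct, and hypothesis (ii) delivers the required vanishing.

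For coprimality, set $p':=(p_iq_{i-1}M_{i+1}+(-1)^{i+1})/q_i$; the rearranged relation $p'q_i-(-1)^{i+1}=p_iq_{i-1}M_{i+1}$ forces $\gcd(p'q_i,q_{i-1}M_{i+1})$ to divide $(-1)^{i+1}$, so $\gcd(p',q_{i-1}M_{i+1})=1$. Hence $q_{i+1}=q_{i-1}M_{i+1}$, closing the induction for \eqref{eq:lemma:recursion}; the convergent-type identity at $i+1$ is then read off from Lemma \ref{lemma:diff_p/q} using the just-established relation. The main obstacle is precisely the splitting of $q_i$: one might hope that $M_{i+1}\equiv m_{i+1}+1\pmod{q_i}$ holds outright, but this fails when $q_i^{(b)}\ne 1$, and the separate treatment of the two parts of $q_i$ is what makes the argument work. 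A secondary subtlety is the role of hypothesis (ii) at $i=1$, which compensates for the absence of a convergent identity at that step since $p_0$ is not defined.
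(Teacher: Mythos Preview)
Your proof is correct and follows essentially the same route as the paper's. The paper's $S_{i+1}$ is your $M_{i+1}$, the paper's $q_i^*=\gcd(q_i,b^{N_i}-1)$ coincides with your $b$-coprime part of $q_i$, and the paper's observation that $q_i\mid b^{m_1}q_i^*$ amounts to your claim $q_i^{(b)}\mid b^{m_1}$; the only organisational difference is that the paper derives the convergent identity $p_iq_{i-1}-p_{i-1}q_i=(-1)^i$ inside the induction step from the hypothesis at $i$, whereas you carry it along as part of a simultaneous induction.
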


\begin{proof}
We proceed by induction on $i$.  If $i=1$, we find
$uq_0q_1=b^{m_1}u_2q_1=b^{m_1}(b^{N_1}-1)$ since
$N_1=m_1+1$.  Suppose now that \eqref{eq:lemma:recursion}
holds for some integer $i\ge 1$.  Since $N_{i+1}=(m_{i+1}+1)N_i$,
we have
\[
 b^{N_{i+1}}-1 = (b^{N_i}-1)S_{i+1}
 \quad\text{where}\quad
 S_{i+1}=1+b^{N_i}+\cdots+b^{m_{i+1}N_i},
\]
and so Lemma \ref{lemma:diff_p/q} yields
\[
 \frac{p_{i+1}}{q_{i+1}}
 = \frac{p_i}{q_i}
   + \frac{(-1)^{i+1}u}{uq_{i-1}q_iS_{i+1}}
 = \frac{R_{i+1}}{q_{i-1}q_iS_{i+1}}
 \quad\text{where}\quad
 R_{i+1}=q_{i-1}p_iS_{i+1}+(-1)^{i+1}.
\]
To complete the induction step, we simply need to show
that $q_i$ divides $R_{i+1}$ because, since
$q_{i-1}S_{i+1}$ is coprime to $R_{i+1}$, this implies
that $q_{i+1}=q_{i-1}S_{i+1}$ and so
\[
 b^{m_1}(b^{N_{i+1}}-1)
  =b^{m_1}(b^{N_i}-1)S_{i+1}
  =uq_{i-1}q_iS_{i+1}
  =uq_iq_{i+1}.
\]
When $i=1$, we use the fact that
$b^{N_1}\equiv 1 \mod q_1$ by Condition (i).
This implies that $S_2\equiv m_2+1 \mod q_1$ and thus,
using Condition (ii), we obtain, as needed,
\[
 R_2\equiv q_0p_1(m_2+1)+1 \equiv 0 \mod q_1.
\]
Now suppose that $i>1$.  Then, \eqref{eq:lemma:recursion}
has the following two consequences.  On the one hand, in
combination with Lemma \ref{lemma:diff_p/q}, it yields
\[
 p_iq_{i-1}-q_ip_{i-1}
  =q_{i-1}q_i\left(\frac{p_i}{q_i}-\frac{p_{i-1}}{q_{i-1}}\right)
  = (-1)^{i},
\]
so $p_iq_{i-1}\equiv(-1)^{i}\mod q_i$, and thus $R_{i+1}\equiv
(-1)^i(S_{i+1}-1) \mod q_i$.  On the other hand, it shows that $q_i$
divides $b^{m_1}(b^{N_i}-1)$, so
$q_i$ divides $b^{m_1}q_i^*$ where $q_i^*=\gcd(q_i,b^{N_i}-1)$.
Modulo $q_i^*$, we have $b^{N_i}\equiv 1$ and $m_{i+1}\equiv 0$
by Condition (iii), thus $S_{i+1}\equiv m_{i+1}+1\equiv 1$.
Since $u_1|b^{m_1}$ and $N_i> N_1>m_1$, we also have
$S_{i+1}\equiv 1 \mod b^{m_1}$.  As $b^{m_1}$ and $q_i^*$ are
coprime, this implies that $b^{m_1}q_i^*$ divides $S_{i+1}-1$
and so $R_{i+1}\equiv (-1)^i(S_{i+1}-1)\equiv 0 \mod q_i$.
\end{proof}

\begin{proof}[\textbf{Proof of Theorem \ref{mainres}}]
Fix a choice of $\epsilon>0$ and of a function $\psi\colon\bN\to(0,1]$.
If $d_2\neq 1$, we take $m_1$ as in Lemma \ref{lemma:choice_m1}
so that Condition (i) of Lemma \ref{lemma:recursion} holds.
Otherwise, we have $d_1=0$ and $u=d_2=1$, and we set $m_1=0$.
This yields $p_1/q_1=(0.\overline{1})_b=1/(b-1)$, and so
Condition (i) still holds.  Moreover, in both cases, the product $q_0p_1
=p_1b^{m_1}/u_1$ is coprime to $q_1$.  Thus, the integers $m_2$
satisfying Condition (ii) of Lemma \ref{lemma:criterion} form
a congruence class modulo $q_1$.  We choose $m_2$ to be the smallest
positive element of that class with $m_2\ge q_1$ for which the
corresponding fraction $p_2/q_2=(0.v\,\overline{w_2})_b$ satisfies
$1/(q_1q_2)<\psi(q_1)$.
More generally, once $m_i$ and $p_i/q_i$ are constructed for
some index $i\ge 2$, we choose $m_{i+1}$ to be the smallest
positive multiple of $q_i$ such that $p_{i+1}/q_{i+1}
=(0.v\,\overline{w_{i+1}})_b$ satisfies
\begin{equation}
 \label{eq1:proof}
 \frac{1}{q_iq_{i+1}}<\psi(q_i).
\end{equation}
This is possible at each step $i\ge 1$ because $N_{i+1}=
|w_{i+1}|=(m_{i+1}+1)N_i$ tends to infinity with $m_{i+1}$
and so, according to Formula \eqref{eq:lemma:recursion}
in Lemma \ref{lemma:recursion}, the ratio
\begin{equation}
 \label{eq2:proof}
 \frac{1}{q_iq_{i+1}}=\frac{u}{b^{m_1}(b^{N_{i+1}}-1)}
\end{equation}
tends to $0$.

We claim that, upon putting $N_0=1$, we have
\begin{equation}
 \label{eq3:proof}
 b^{m_iN_{i-1}} \le q_i < b^{(m_i+1)N_{i-1}}=b^{N_i}
\end{equation}
for each $i\ge 1$.  For $i=1$, this follows from
\[
 b^{m_1}
   \le \frac{(b-1)b^{m_1}}{u_2}
   \le \frac{b^{m_1+1}-1}{u_2}=q_1
   \le b^{m_1+1}.
\]
If $i>1$ and if we assume that \eqref{eq3:proof} holds
for all smaller values of $i$, then we have $q_{i-1}\ge q_0$
and, since $u\le b^{m_1}\le uq_0$, we find
\[
 b^{N_i-N_{i-1}}
   \le \frac{b^{N_i}}{q_{i-1}+1}
   < \frac{b^{m_1}(b^{N_i}-1)}{uq_{i-1}}
   = q_i
   < \frac{b^{m_1+N_i}}{uq_0}
   \le b^{N_i}.
\]
So, by induction, \eqref{eq3:proof} holds for all $i\ge 1$.

In particular, the sequence $(q_i)_{i\ge 1}$ is strictly increasing
and thanks to \eqref{eq:lemma:recursion}, Lemma \ref{lemma:criterion}
shows that $(p_i/q_i)_{i\ge 1}$ is a sequence of consecutive convergents
to its limit $\xi\in C$. Fix an index $i\ge 1$.  By the theory of
continued fractions, we have
\[
 \frac{1}{2q_iq_{i+1}}
  < \left|\xi-\frac{p_i}{q_i}\right|
  < \frac{1}{q_iq_{i+1}}.
\]
According to \eqref{eq1:proof}, this implies
that $|\xi-p_i/q_i|<\psi(q_i)$.  By \eqref{eq3:proof}, we also
have $q_{i+1} \ge b^{m_{i+1}N_i}\ge q_i^{q_i}$ because $m_{i+1}\ge q_i$,
and this further yields $|\xi-p_i/q_i|\le 1/q_{i+1}<q_i^{-q_i}$.

To get a lower bound for $|\xi-p_i/q_i|$ when $i\ge 2$, we note that,
if $m_{i+1}>q_i$, then, by virtue of \eqref{eq2:proof}
and of the choice of $m_{i+1}$, we have
\[
 \psi(q_i)
 \le \frac{u}{b^{m_1}(b^{N_{i+1}-q_iN_i}-1)}
  \le \frac{1}{b^{N_{i+1}-q_iN_i}}.
\]
This is again true if $m_{i+1}=q_i$ because, by hypothesis,
$\psi(q_i)\le 1$.  Thus we obtain
\[
 \left|\xi-\frac{p_i}{q_i}\right|
  > \frac{1}{2q_iq_{i+1}}
  = \frac{u}{2b^{m_1}(b^{N_{i+1}}-1)}
  \ge \frac{1}{2b^{m_1+N_{i+1}}}
  \ge \frac{\psi(q_i)}{2b^{m_1+q_iN_i}}
  \ge \frac{\psi(q_i)}{2b^{m_1}q_i^{q_i(1+m_{i})/m_{i}}},
\]
where the last inequality uses \eqref{eq3:proof}. As $m_i$ tends to
infinity with $i$ (because $m_i\ge q_{i-1}$ for $i\ge 2$), this means
that $|\xi-p_i/q_i|>\psi(q_i)q_i^{-(1+\epsilon)q_i}$ for each sufficiently
large index $i$.

This proves the theorem with $c_1=q_1$ as $q_1$
depends only on $b$ and $u$.  If $d_2=1$, we have
$D=\{0,1\}$ and we can even take
$c_1=1$ because $0=0/1$ is the only missing
convergent of $\xi$ among $(p_i/q_i)_{i\ge 1}$, and
$0=(0.\overline{0})_b$ belongs to $C$.
\end{proof}

\section*{Acknowledgments}
The research of Johannes Schleischitz was supported by the Schr\"odinger Scholarship J 3824 of the Austrian Science Fund (FWF), while that of D.~Roy was partly supported by an NSERC discovery grant.

\smallskip
\noindent
Damien {\sc Roy} and Johannes {\sc Schleischitz}\\
Department of mathematics and statistics \\
University of Ottawa\\
585 King Edward avenue\\
Ottawa, Ontario, Canada K1N 6N5\\[5pt]
\noindent
email of D.~Roy: droy@uottawa.ca\\
email of J.~Schleischitz: johannes.schleischitz@univie.ac.at\\

\end{document}